\providecommand{\U}[1]{\protect\rule{.1in}{.1in}}
\newtheorem{theorem}{Theorem}
\newtheorem{corollary}[theorem]{Corollary}
\newtheorem{lemma}[theorem]{Lemma}
\newtheorem{remark}[theorem]{Remark}
\newenvironment{proof}[1][Proof]{\noindent\textbf{#1.} }{\ \rule{0.5em}{0.5em}}
\begin{document}
%
\title{On Approximate Controllability of Impulsive Linear Evolution Equations}%
%

\author{N. I. Mahmudov \\{Department of Mathematics \\
Eastern Mediterranean University,
Gazimagusa\\
T.R. North Cyprus, Mersin 10, TURKEY
\\
email: nazim.mahmudov@emu.edu.tr}       }%
%

\maketitle
%

\begin{abstract}
In this paper, we study an approximate controllability for the impulsive
linear evolution equations in Hilbert spaces. The necessary and sufficient
conditions for approximate controllability in terms of resolvent operators are
given. An example is provided to illustrate the application of the obtained
results.%
\end{abstract}%

\textbf{Keywords}: Approximate controllability, impulsive equation, evolution
equation, resolvent condition

\section{Introduction}

In recent years, there has been growing interests towards the study of
controllability of impulsive systems, that is, the systems in which the
system-state is subject to impulse at discrete time points. This topic has
popularity and quite broad literature, see for example, \cite{laks},
\cite{pandit}, \cite{guan1}-\cite{zhao}. The present paper studies the
necessary and sufficient conditions for approximate controllability of
dynamical systems described by linear impulsive differential equations in
Hilbert spaces, under the basic assumption that the operator $A$ acting on the
state is the infinitesimal generator of a strongly continuous semigroup.
Approximate controllability means we can control a transfer from an arbitrary
point to a small neighborhood of any other point. It can be explained by the
fact that in infinite-dimensional spaces, there are linear non closed
subspaces, see \cite{triggiani}. By looking at the approximate controllability
problem as the limit of optimal control problems and reformulating the optimal
control problem in terms of the convergence of resolvent operators, we found
the necessary and sufficient conditions for the for approximate
controllability of the impulsive linear evolution systems. The condition in
terms of resolvent is easy to use and it is used in the number of articles
devoted to the approximate controllability for different semilinear
differential equations. In the absence of impulses, so-called resolvent
condition is equivalent to the approximate controllability of the associated
linear part of the semilinear evolution control system (see \cite{basmah}%
,\ \cite{mah}), but the problem becomes complicated in the presence of impulses.

In this article, we investigate approximate controllability of the following
linear evolution systems with impulse effects%
\begin{equation}
\left\{
\begin{tabular}
[c]{ll}%
$x^{\prime}\left(  t\right)  =Ax\left(  t\right)  +Bu\left(  t\right)  ,\ $ &
$t\in\left[  0,b\right]  \backslash\left\{  t_{1},...,t_{p}\right\}  ,$\\
$\Delta x\left(  t_{k}\right)  =C_{k}x\left(  t_{k}\right)  +D_{k}v_{k},$ &
$\ t=t_{k},\ \ k=1,...,p,$\\
$x\left(  0\right)  =x_{0},$ &
\end{tabular}
\ \ \ \ \ \ \ \ \ \right.  \label{im1}%
\end{equation}
where the state variable $x\left(  \cdot\right)  $ takes values in Hilbert
space $H$ with the norm $\left\Vert \cdot\right\Vert =\sqrt{\left\langle
\cdot,\cdot\right\rangle }$. The control function $u\left(  \cdot\right)  $ is
given in $L^{2}\left(  \left[  0,b\right]  ,U\right)  $, a Hilbert space of
admissible control functions with $U$ as a Hilbert space, $v_{k}\in U,$
$k=1,...,p$. $A$ is the infinitesimal generator of a strongly continuous
semigroup of bounded linear operators $S\left(  t\right)  $ in $H$, $B\in
L\left(  U,H\right)  $, $C_{k}\in L\left(  H,H\right)  $, $D_{k}\in L\left(
U,H\right)  .$ $\Delta x\left(  t_{k}\right)  =x\left(  t_{k}^{+}\right)
-x\left(  t_{k}^{-}\right)  $ where $x\left(  t_{k}^{\pm}\right)
=\lim_{h\rightarrow0^{\pm}}x\left(  t_{k}+h\right)  $ with discontinuity
points $t_{k},$ $k=1,...,p,$ $0=t_{0}<t_{1}<t_{2}<...<t_{p}<t_{p+1}=b$. It is
assumed that $x\left(  t_{k}^{-}\right)  =x\left(  t_{k}\right)  .$

\begin{lemma}
\label{Lem:1}The mild solution of (\ref{im1}) is given by%
\begin{align*}
x\left(  t\right)   &  =S\left(  t\right)  x\left(  0\right)  +%
{\displaystyle\int_{0}^{t}}
S\left(  t-s\right)  Bu\left(  s\right)  ds,\ \ 0\leq t\leq t_{1},\\
x\left(  t\right)   &  =S\left(  t-t_{k}\right)  x\left(  t_{k}^{+}\right)  +%
{\displaystyle\int_{t_{k}}^{t}}
S\left(  t-s\right)  Bu\left(  s\right)  ds,\ \ t_{k}<t\leq t_{k+1}%
,\ \ k=1,2,...,p,
\end{align*}
where
\begin{align*}
x\left(  t_{k}^{+}\right)   &  =%
{\displaystyle\prod_{j=k}^{1}}
S_{C}\left(  t_{j},t_{j-1}\right)  x_{0}+%
{\displaystyle\sum_{i=1}^{k}}
{\displaystyle\prod_{j=k}^{i+1}}
S_{C}\left(  t_{j},t_{j-1}\right)
{\displaystyle\int_{t_{i-1}}^{t_{i}}}
S_{C}\left(  t_{i},s\right)  Bu\left(  s\right)  ds\\
&  +%
{\displaystyle\sum_{i=2}^{k}}
{\displaystyle\prod_{j=k}^{i}}
S_{C}\left(  t_{j},t_{j-1}\right)  D_{i-1}u_{i-1}+D_{k}u_{k},\ \ S_{C}\left(
t_{j},t_{j-1}\right)  :=\left(  I+C_{j}\right)  S\left(  t_{j}-t_{j-1}\right)
.
\end{align*}

\end{lemma}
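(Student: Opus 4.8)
The plan is to derive the mild solution formula by induction on the intervals $(t_k, t_{k+1}]$, piecing together the classical variation-of-parameters formula for $x' = Ax + Bu$ on each impulse-free subinterval with the jump conditions at the points $t_k$. First I would treat the base interval $[0, t_1]$: since there is no impulse in the open interval, $x$ restricted to $[0,t_1]$ is just the mild solution of the non-impulsive problem with initial value $x(0) = x_0$, which is the standard $x(t) = S(t)x_0 + \int_0^t S(t-s)Bu(s)\,ds$. Evaluating at $t = t_1$ and applying the jump condition $\Delta x(t_1) = C_1 x(t_1) + D_1 v_1$ together with $x(t_1^-) = x(t_1)$ gives $x(t_1^+) = (I + C_1) x(t_1^-) + D_1 v_1 = S_C(t_1, t_0) x_0 + \int_{t_0}^{t_1} S_C(t_1, s) Bu(s)\,ds + D_1 v_1$, where I use $S_C(t_1,s) := (I+C_1) S(t_1 - s)$ extended in the obvious way to the integrand (i.e. $(I+C_j)S(t_j - s)$ for $s \in (t_{j-1}, t_j)$). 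This matches the claimed formula for $k = 1$.

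Next I would run the inductive step. Assume the formula for $x(t_k^+)$ holds. On $(t_k, t_{k+1}]$ there is again no impulse, so $x(t) = S(t - t_k) x(t_k^+) + \int_{t_k}^t S(t-s) Bu(s)\,ds$, giving the second displayed line of the lemma and, in particular, $x(t_{k+1}^-) = x(t_{k+1}) = S(t_{k+1} - t_k) x(t_k^+) + \int_{t_k}^{t_{k+1}} S(t_{k+1} - s) Bu(s)\,ds$. Applying the jump at $t_{k+1}$, $x(t_{k+1}^+) = (I + C_{k+1}) x(t_{k+1}^-) + D_{k+1} v_{k+1} = S_C(t_{k+1}, t_k) x(t_k^+) + \int_{t_k}^{t_{k+1}} S_C(t_{k+1}, s) Bu(s)\,ds + D_{k+1} v_{k+1}$. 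Substituting the inductive hypothesis for $x(t_k^+)$ and then left-multiplying the three groups of terms by $S_C(t_{k+1}, t_k)$, I would verify that the products $\prod_{j=k}^{1} S_C$ become $\prod_{j=k+1}^{1} S_C$, the control-integral sum over $i = 1,\dots,k$ becomes the sum over $i = 1,\dots,k+1$ (the new $i = k+1$ term being $\int_{t_k}^{t_{k+1}} S_C(t_{k+1},s)Bu(s)\,ds$, which corresponds to an empty product $\prod_{j=k+1}^{k+2} S_C = I$), the impulse sum over $i = 2,\dots,k$ together with the previous isolated $D_k v_k$ term collapses into the sum over $i = 2,\dots,k+1$, and a new isolated $D_{k+1} v_{k+1}$ appears. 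This index bookkeeping is the only place anything can go wrong.

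The main obstacle, such as it is, is purely notational: one must be careful with the product conventions $\prod_{j=k}^{1}$ (descending, empty when the lower limit exceeds $k$ appropriately) and with how the factor $S_C(t_{k+1},t_k)$ threads through each sum so that the exponent ranges on the products shift correctly. There is no analytic difficulty — existence and continuity on each open subinterval are immediate from semigroup theory, and the jump conditions are algebraic. I would present the $k=1$ case explicitly, then write out the inductive substitution and reindexing carefully, and conclude by induction that the formula holds for all $k = 1, \dots, p$, which yields the stated mild solution on all of $[0,b]$.
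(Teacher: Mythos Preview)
Your proposal is correct and is in fact more complete than the paper's own treatment: the paper does not give a proof at all but simply remarks that the finite-dimensional case is Lemma~2.1 of Guan, Qian, and Yu and that the argument here is the same. Your induction on the impulse index---variation of parameters on each $(t_k,t_{k+1}]$, then applying the jump $(I+C_{k+1})(\cdot)+D_{k+1}v_{k+1}$ and reindexing the descending products---is exactly the standard derivation underlying that cited lemma, and your identification of the only delicate point (the product conventions and the way the empty product $\prod_{j=k+1}^{k+2}=I$ absorbs the new integral term) is accurate.
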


\begin{proof}
The finite dimensional case is proved in \cite{guan2}. The proof is similar to
that of Lemma 2.1 in \cite{guan2}.
\end{proof}

Associated with (\ref{im1}) ( $B=0,$ $D_{k}=0$), consider it's adjoint
equation given by%
\begin{equation}
\left\{
\begin{tabular}
[c]{l}%
$\psi^{\prime}\left(  t\right)  =-A^{\ast}\psi\left(  t\right)  ,$\\
$\psi\left(  b\right)  =\varphi,$\\
$\Delta\psi\left(  t_{p-k+1}\right)  =-C_{p-k+1}^{\ast}\psi\left(
t_{p-k+1}^{+}\right)  ,\ \ k=1,...,p.$%
\end{tabular}
\ \ \ \ \ \ \ \ \right.  \label{ad1}%
\end{equation}
Here $A^{\ast},$ $C_{p-k+1}^{\ast}$ are adjoint operators.

\begin{lemma}
\label{Lem:2}The mild solution of the adjoint equation (\ref{ad1}) is given by%
\begin{align}
\psi\left(  t\right)   &  =S^{\ast}\left(  b-t\right)  \varphi,\ \ t_{p}<t\leq
b,\nonumber\\
\psi\left(  t\right)   &  =S_{C}^{\ast}\left(  t_{k},t\right)  \prod
_{i=k+1}^{p}S_{C}^{\ast}\left(  t_{i},t_{i-1}\right)  S^{\ast}\left(
b-t_{p}\right)  \varphi,\ \ t_{k-1}<t\leq t_{k},\ k=p,...,1,\ \ \prod
_{i=p+1}^{p}=1. \label{ad2}%
\end{align}

\end{lemma}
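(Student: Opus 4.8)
The plan is to integrate the backward problem (\ref{ad1}) on each subinterval, moving from $t=b$ down to $t=0$, and to splice the pieces together through the impulse relations. I will use that, $H$ being a Hilbert space and hence reflexive, the adjoint family $S^{\ast}(t)$ is again a strongly continuous semigroup, with infinitesimal generator $A^{\ast}$; consequently the mild solution of $\psi^{\prime}(t)=-A^{\ast}\psi(t)$ on an interval $(a,c]$ with prescribed value $\psi(c)=\eta$ is, via the time reversal $\tau\mapsto c-\tau$ (which turns the equation into $\phi^{\prime}(\tau)=A^{\ast}\phi(\tau)$, $\phi(0)=\eta$), given by $\psi(t)=S^{\ast}(c-t)\eta$ for $a<t\le c$.

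On the last interval $(t_{p},b]$ there is no impulse and $\psi(b)=\varphi$, so the remark above gives $\psi(t)=S^{\ast}(b-t)\varphi$, which is the first line of (\ref{ad2}); letting $t\downarrow t_{p}$ yields $\psi(t_{p}^{+})=S^{\ast}(b-t_{p})\varphi$. For the remaining intervals I run a finite downward induction on $k=p,p-1,\dots,1$. First observe that in (\ref{ad1}) the jump point $t_{p-k+1}$, as $k$ runs through $1,\dots,p$, runs through $t_{p},t_{p-1},\dots,t_{1}$, so the impulse relation at the node $t_{k}$ reads $\psi(t_{k}^{+})-\psi(t_{k}^{-})=-C_{k}^{\ast}\psi(t_{k}^{+})$, i.e.\ $\psi(t_{k}^{-})=(I+C_{k}^{\ast})\psi(t_{k}^{+})$; with the left-continuity convention $\psi(t_{k})=\psi(t_{k}^{-})$ this fixes the terminal value on $(t_{k-1},t_{k}]$. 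Applying the time-reversal remark on $(t_{k-1},t_{k}]$, and using $S_{C}^{\ast}(t_{k},t)=[(I+C_{k})S(t_{k}-t)]^{\ast}=S^{\ast}(t_{k}-t)(I+C_{k}^{\ast})$, gives
\[
\psi(t)=S^{\ast}(t_{k}-t)(I+C_{k}^{\ast})\psi(t_{k}^{+})=S_{C}^{\ast}(t_{k},t)\,\psi(t_{k}^{+}),\qquad t_{k-1}<t\le t_{k}.
\]
For $k=p$ this, combined with $\psi(t_{p}^{+})=S^{\ast}(b-t_{p})\varphi$, is exactly the $k=p$ instance of (\ref{ad2}), the empty product $\prod_{i=p+1}^{p}$ being $1$. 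For the inductive step, evaluating the already-established formula on $(t_{k},t_{k+1}]$ as $t\downarrow t_{k}$ gives $\psi(t_{k}^{+})=\prod_{i=k+1}^{p}S_{C}^{\ast}(t_{i},t_{i-1})S^{\ast}(b-t_{p})\varphi$, and inserting this into the displayed identity reproduces the claimed expression for $\psi$ on $(t_{k-1},t_{k}]$.

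The manipulations are routine; the two points deserving care are (i) pinning down the notion of mild solution for the backward impulsive problem — that $S^{\ast}(\cdot)$ is a $C_{0}$-semigroup generated by $A^{\ast}$ (valid because $H$ is reflexive) and that $\psi$ is taken left-continuous at the nodes, so the value $\psi(t_{k}^{+})$ occurring on the right of the jump relation is the one inherited from the interval to the right — and (ii) the index bookkeeping in the telescoping products, including the convention $\prod_{i=p+1}^{p}=1$. A shorter alternative is to take (\ref{ad2}) as an ansatz and verify directly that it is continuously differentiable and solves $\psi^{\prime}=-A^{\ast}\psi$ on each $(t_{k-1},t_{k})$, matches $\psi(b)=\varphi$, and satisfies each jump relation; I would remark on this but carry out the constructive derivation above.
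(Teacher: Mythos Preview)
Your proof is correct and follows essentially the same route as the paper: both argue by backward induction from the last interval $(t_{p},b]$, use the jump relation $\psi(t_{k}^{-})=(I+C_{k}^{\ast})\psi(t_{k}^{+})$ to fix the terminal datum on $(t_{k-1},t_{k}]$, and then identify $S^{\ast}(t_{k}-t)(I+C_{k}^{\ast})=S_{C}^{\ast}(t_{k},t)$. You supply more justification (reflexivity of $H$ so that $S^{\ast}$ is a $C_{0}$-semigroup generated by $A^{\ast}$, the time-reversal computation, and the left-continuity convention), but the argument is the same.
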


\begin{proof}
For $t_{p}<t\leq b$ the formula (\ref{ad2}) is obvious. For $t_{p-1}<t\leq
t_{p},$ we have%
\begin{align*}
\psi\left(  t\right)   &  =S^{\ast}\left(  t_{p}-t\right)  \psi\left(
t_{p}^{-}\right)  =S^{\ast}\left(  t_{p}-t\right)  \left(  I+C_{p}^{\ast
}\right)  \psi\left(  t_{p}^{+}\right) \\
&  =S^{\ast}\left(  t_{p}-t\right)  \left(  I+C_{p}^{\ast}\right)  S^{\ast
}\left(  b-t_{p}\right)  \varphi=S_{C}^{\ast}\left(  t_{p},t\right)  S^{\ast
}\left(  b-t_{p}\right)  \varphi.
\end{align*}
By using the induction we get the desired formula (\ref{ad2}).
\end{proof}

We now introduce the following lemma which characterizes the relationship
between solutions of (\ref{im1}), (\ref{ad1}) and the control operators.

\begin{lemma}
\label{Lem:3}For the solutions (\ref{im1}) and (\ref{ad2}), the following
formula holds:%
\begin{equation}
\left\langle x\left(  b\right)  ,\psi\left(  b\right)  \right\rangle
-\left\langle x\left(  0\right)  ,\psi\left(  0\right)  \right\rangle =%
{\displaystyle\sum_{k=1}^{p+1}}
{\displaystyle\int_{t_{k-1}}^{t_{k}}}
\left\langle u\left(  s\right)  ,B^{\ast}\psi\left(  s\right)  \right\rangle
ds+%
{\displaystyle\sum_{k=1}^{p}}
\left\langle v_{k},D_{k}^{\ast}\psi\left(  t_{k}^{+}\right)  \right\rangle .
\label{f2}%
\end{equation}

\end{lemma}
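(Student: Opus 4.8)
The plan is to prove the identity (\ref{f2}) by working interval-by-interval between consecutive impulse points and summing the results, mimicking the classical "integration by parts" argument that underlies duality between a control system and its adjoint. On each open interval $(t_{k-1},t_{k})$ both $x$ and $\psi$ satisfy (on the level of mild solutions) the equations $x'=Ax+Bu$ and $\psi'=-A^{\ast}\psi$, so the function $s\mapsto\langle x(s),\psi(s)\rangle$ is (formally) differentiable there with
\[
\frac{d}{ds}\langle x(s),\psi(s)\rangle=\langle Ax(s)+Bu(s),\psi(s)\rangle+\langle x(s),-A^{\ast}\psi(s)\rangle=\langle Bu(s),\psi(s)\rangle=\langle u(s),B^{\ast}\psi(s)\rangle.
\]
The rigorous version of this step avoids differentiating by instead using the semigroup representations from Lemmas~\ref{Lem:1} and~\ref{Lem:2}: on $(t_{k-1},t_{k})$ write $x(t)=S(t-t_{k-1})x(t_{k-1}^{+})+\int_{t_{k-1}}^{t}S(t-s)Bu(s)\,ds$ (with $t_0=0$, $x(t_0^+)=x_0$) and the corresponding formula for $\psi$, pair them, and use the adjoint semigroup identity $\langle S(t-s)h,S^{\ast}(\sigma)\varphi\rangle=\langle h,S^{\ast}(t-s+\sigma)\varphi\rangle$ together with Fubini to obtain
\[
\langle x(t_k^{-}),\psi(t_k^{-})\rangle-\langle x(t_{k-1}^{+}),\psi(t_{k-1}^{+})\rangle=\int_{t_{k-1}}^{t_k}\langle u(s),B^{\ast}\psi(s)\rangle\,ds.
\]

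Next I would telescope these $p+1$ identities (for $k=1,\dots,p+1$, where $t_{p+1}=b$ and we use $x(b)=x(b^{-})$, $\psi(b)=\psi(b^{-})=\varphi$, and $x(0)=x(0^{+})$, $\psi(0)=\psi(0^{+})$). Summing, the left side collapses to
\[
\langle x(b),\psi(b)\rangle-\langle x(0),\psi(0)\rangle-\sum_{k=1}^{p}\Bigl(\langle x(t_k^{+}),\psi(t_k^{+})\rangle-\langle x(t_k^{-}),\psi(t_k^{-})\rangle\Bigr),
\]
while the right side is exactly $\sum_{k=1}^{p+1}\int_{t_{k-1}}^{t_k}\langle u(s),B^{\ast}\psi(s)\rangle\,ds$. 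So it remains to show that each jump term equals $-\langle v_k,D_k^{\ast}\psi(t_k^{+})\rangle$, i.e.
\[
\langle x(t_k^{+}),\psi(t_k^{+})\rangle-\langle x(t_k^{-}),\psi(t_k^{-})\rangle=-\langle v_k,D_k^{\ast}\psi(t_k^{+})\rangle.
\]
For this I use the jump conditions: $x(t_k^{+})=x(t_k^{-})+C_k x(t_k^{-})+D_k v_k=(I+C_k)x(t_k)+D_kv_k$ from (\ref{im1}), and from the adjoint system $\psi(t_k^{-})=(I+C_k^{\ast})\psi(t_k^{+})$ (this is how Lemma~\ref{Lem:2} was derived, reading $\Delta\psi(t_k)=\psi(t_k^{+})-\psi(t_k^{-})=-C_k^{\ast}\psi(t_k^{+})$). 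Then
\[
\langle x(t_k^{+}),\psi(t_k^{+})\rangle=\langle (I+C_k)x(t_k)+D_kv_k,\psi(t_k^{+})\rangle=\langle x(t_k),(I+C_k^{\ast})\psi(t_k^{+})\rangle+\langle D_kv_k,\psi(t_k^{+})\rangle,
\]
and since $\langle x(t_k),(I+C_k^{\ast})\psi(t_k^{+})\rangle=\langle x(t_k^{-}),\psi(t_k^{-})\rangle$, the difference is $\langle D_kv_k,\psi(t_k^{+})\rangle=\langle v_k,D_k^{\ast}\psi(t_k^{+})\rangle$. Comparing signs with the telescoped identity (the jump term entered with a minus sign) yields precisely the last sum in (\ref{f2}), completing the proof.

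The main obstacle is the rigor of the interior step: since $x_0$ and $\varphi$ need not lie in the domains of $A$ and $A^{\ast}$, one cannot literally differentiate $\langle x(s),\psi(s)\rangle$, so the argument must be carried out through the mild-solution (variation-of-constants) formulas and the adjoint-semigroup relation, with care about measurability and the applicability of Fubini's theorem to the double integral $\int_{t_{k-1}}^{t}\int_{t_{k-1}}^{t}\langle S(t-s)Bu(s),\text{(adjoint pieces)}\rangle$. A clean alternative, which I would actually prefer for the writeup, is to establish the interior identity first for smooth controls $u$ and initial data in $D(A)$ (where the differentiation is legitimate) and then pass to the limit using density and the boundedness of all operators involved; the bookkeeping of the finite products $S_C(t_j,t_{j-1})$ in Lemma~\ref{Lem:1} is then never needed explicitly, since the telescoping handles it automatically.
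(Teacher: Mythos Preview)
Your proposal is correct and follows essentially the same route as the paper: establish the interior identity on each subinterval via the mild-solution formulas from Lemmas~\ref{Lem:1} and~\ref{Lem:2}, telescope, and use the jump relations $x(t_k^{+})=(I+C_k)x(t_k)+D_kv_k$ and $\psi(t_k^{-})=(I+C_k^{\ast})\psi(t_k^{+})$ to produce the $D_k$-terms. One cosmetic slip: your displayed claim $\langle x(t_k^{+}),\psi(t_k^{+})\rangle-\langle x(t_k^{-}),\psi(t_k^{-})\rangle=-\langle v_k,D_k^{\ast}\psi(t_k^{+})\rangle$ has the wrong sign---your own subsequent computation correctly gives $+\langle v_k,D_k^{\ast}\psi(t_k^{+})\rangle$, which is exactly what is needed once the jump sum is moved to the right-hand side.
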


\begin{proof}
It is clear that%
\begin{equation}
\left\langle x\left(  t_{1}\right)  ,\psi\left(  t_{1}\right)  \right\rangle
-\left\langle x\left(  0\right)  ,\psi\left(  0\right)  \right\rangle =%
{\displaystyle\int_{0}^{t_{1}}}
\left\langle Bu\left(  s\right)  ,\psi\left(  s\right)  \right\rangle ds.
\label{qq1}%
\end{equation}
From Lemmas \ref{Lem:1} and \ref{Lem:2}, we have%
\begin{gather}
\left\langle x\left(  b\right)  ,\psi\left(  b\right)  \right\rangle
-\left\langle x\left(  t_{1}\right)  ,\psi\left(  t_{1}\right)  \right\rangle
=%
{\displaystyle\sum_{k=2}^{p+1}}
\left[  \left\langle x\left(  t_{k}\right)  ,\psi\left(  t_{k}\right)
\right\rangle -\left\langle x\left(  t_{k-1}\right)  ,\psi\left(
t_{k-1}\right)  \right\rangle \right] \nonumber\\
=%
{\displaystyle\sum_{k=2}^{p+1}}
\left[  \left\langle S\left(  t_{k}-t_{k-1}\right)  x\left(  t_{k-1}%
^{+}\right)  +%
{\displaystyle\int_{t_{k-1}}^{t_{k}}}
S\left(  t_{k}-s\right)  Bu\left(  s\right)  ds,\psi\left(  t_{k}\right)
\right\rangle -\left\langle x\left(  t_{k-1}\right)  ,\left(  I+C_{k-1}^{\ast
}\right)  \psi\left(  t_{k-1}^{+}\right)  \right\rangle \right] \nonumber\\
=%
{\displaystyle\sum_{k=2}^{p+1}}
{\displaystyle\int_{t_{k-1}}^{t_{k}}}
\left\langle Bu\left(  s\right)  ,S^{\ast}\left(  t_{k}-s\right)  \psi\left(
t_{k}\right)  \right\rangle ds\nonumber\\
+%
{\displaystyle\sum_{k=2}^{p+1}}
\left[  \left\langle x\left(  t_{k-1}\right)  +C_{k-1}x\left(  t_{k-1}\right)
+D_{k-1}v_{k-1},\psi\left(  t_{k-1}^{+}\right)  \right\rangle -\left\langle
x\left(  t_{k-1}\right)  ,\left(  I+C_{k-1}^{\ast}\right)  \psi\left(
t_{k-1}^{+}\right)  \right\rangle \right] \nonumber\\
=%
{\displaystyle\sum_{k=2}^{p+1}}
{\displaystyle\int_{t_{k-1}}^{t_{k}}}
\left\langle u\left(  s\right)  ,B^{\ast}\psi\left(  s\right)  \right\rangle
ds+%
{\displaystyle\sum_{k=1}^{p}}
\left\langle v_{k},D_{k}^{\ast}\psi\left(  t_{k}^{+}\right)  \right\rangle .
\label{qq2}%
\end{gather}
Combining (\ref{qq1}) and (\ref{qq2}), the formula (\ref{f2}) is obtained.
\end{proof}

\section{Main results}

For convenience, denote by $w=\left(  u\left(  \cdot\right)  ,\left\{
v_{k}\right\}  _{k=1}^{p}\right)  \in L^{2}\left(  \left[  0,b\right]
,U\right)  \times U^{p}.$ The space $L^{2}\left(  \left[  0,b\right]
,U\right)  \times U^{p}$ of $w$ is a Hilbert space with respect to the inner
product $\left\langle \cdot,\cdot\right\rangle _{1}$ is defined as
$\left\langle w_{1},w_{2}\right\rangle _{1}=%
{\displaystyle\int_{0}^{b}}
\left\langle u_{1}\left(  s\right)  ,u_{2}\left(  s\right)  \right\rangle
_{U}ds+%
{\displaystyle\sum_{k=1}^{p}}
\left\langle v_{1k},v_{2k}\right\rangle _{U}$ for all $w_{1},w_{2}\in
L^{2}\left(  \left[  0,b\right]  ,U\right)  \times U^{p}$.

To define the analogue of controllability operator for impulsive system, we
introduce the bounded linear operator $M:L^{2}\left(  \left[  0,b\right]
,U\right)  \times U^{p}\rightarrow H$ as follows%
\begin{align*}
Mw  &  =S\left(  b-t_{p}\right)
{\displaystyle\sum_{i=1}^{p}}
{\displaystyle\prod_{j=p}^{i+1}}
S_{C}\left(  t_{j},t_{j-1}\right)
{\displaystyle\int_{t_{i-1}}^{t_{i}}}
S_{C}\left(  t_{i},s\right)  Bu\left(  s\right)  ds+%
{\displaystyle\int_{t_{p}}^{b}}
S\left(  b-s\right)  Bu\left(  s\right)  ds\\
&  +S\left(  b-t_{p}\right)
{\displaystyle\sum_{i=2}^{p}}
{\displaystyle\prod_{j=p}^{i}}
S_{C}\left(  t_{j},t_{j-1}\right)  D_{i-1}v_{i-1}+S\left(  b-t_{p}\right)
D_{p}v_{p}.
\end{align*}
Letting $x\left(  0\right)  =0$ in (\ref{f2}) yields $\left\langle x\left(
b\right)  ,\varphi\right\rangle =\left\langle w,M^{\ast}\psi_{b}\right\rangle
_{1}=%
{\displaystyle\int_{0}^{b}}
\left\langle u\left(  s\right)  ,B^{\ast}\psi\left(  s\right)  \right\rangle
ds+%
{\displaystyle\sum_{k=1}^{p}}
\left\langle v_{k},D_{k}^{\ast}\psi\left(  t_{k}^{+}\right)  \right\rangle
,$which implies%
\begin{align*}
M^{\ast}\varphi &  =\left(  B^{\ast}\psi\left(  \cdot\right)  ,\left\{
D_{k}^{\ast}\psi\left(  t_{k}^{+}\right)  \right\}  _{k=1}^{p}\right)  ,\\
B^{\ast}\psi\left(  t\right)   &  =\left\{
\begin{tabular}
[c]{ll}%
$B^{\ast}S^{\ast}\left(  b-t\right)  \varphi,$ & $t_{p}<t\leq b,$\\
$B^{\ast}S_{C}^{\ast}\left(  t_{k},t\right)  \prod_{i=k+1}^{p}S_{C}^{\ast
}\left(  t_{i},t_{i-1}\right)  S^{\ast}\left(  b-t_{p}\right)  \varphi,$ &
$t_{k-1}<t\leq t_{k},$%
\end{tabular}
\ \ \ \ \right. \\
D_{k}^{\ast}\psi\left(  t_{k}^{+}\right)   &  =\left\{
\begin{tabular}
[c]{ll}%
$D_{p}^{\ast}S^{\ast}\left(  b-t_{p}\right)  \varphi,$ & $k=p,$\\
$D_{k}^{\ast}\prod_{i=k}^{p}S_{C}^{\ast}\left(  t_{i},t_{i-1}\right)  S^{\ast
}\left(  b-t_{p}\right)  \varphi,$ & $k=p-1,...,1.$%
\end{tabular}
\ \ \ \ \right.
\end{align*}
We are ready to introduce four controllability operators $\Gamma_{t_{p}}%
^{b},\widetilde{\Gamma}_{t_{p}}^{b},\Theta_{0}^{t_{p}},\widetilde{\Theta}%
_{0}^{t_{p}}:H\rightarrow H$,%
\begin{align*}
\Gamma_{t_{p}}^{b}  &  :=%
{\displaystyle\int_{t_{p}}^{b}}
S\left(  b-s\right)  BB^{\ast}S^{\ast}\left(  b-s\right)  ds,\ \ \ \widetilde
{\Gamma}_{t_{p}}^{b}:=S\left(  b-t_{p}\right)  D_{p}D_{p}^{\ast}S^{\ast
}\left(  b-t_{p}\right)  ,\\
\Theta_{0}^{t_{p}}  &  :=S\left(  b-t_{p}\right)
{\displaystyle\sum_{i=1}^{p}}
{\displaystyle\prod_{j=p}^{i+1}}
S_{C}\left(  t_{j},t_{j-1}\right)
{\displaystyle\int_{t_{i-1}}^{t_{i}}}
S_{C}\left(  t_{i},s\right)  BB^{\ast}S_{C}^{\ast}\left(  t_{i},s\right)
ds\prod_{k=i+1}^{p}S_{C}^{\ast}\left(  t_{k},t_{k-1}\right)  S^{\ast}\left(
b-t_{p}\right)  ,\\
\widetilde{\Theta}_{0}^{t_{p}}  &  :=S\left(  b-t_{p}\right)
{\displaystyle\sum_{i=2}^{p}}
{\displaystyle\prod_{j=p}^{i}}
S_{C}\left(  t_{j},t_{j-1}\right)  D_{i-1}D_{i-1}^{\ast}\prod_{k=i}^{p}%
S_{C}^{\ast}\left(  t_{k},t_{k-1}\right)  S^{\ast}\left(  b-t_{p}\right)  .
\end{align*}
It is clear that $MM^{\ast}=\Theta_{0}^{t_{p}}+\Gamma_{t_{p}}^{b}%
+\widetilde{\Theta}_{0}^{t_{p}}+\widetilde{\Gamma}_{t_{p}}^{b}.$

\begin{remark}
Note that in nonimpulsive case $MM^{\ast}=\Gamma_{0}^{b}:=%
{\displaystyle\int_{0}^{b}}
S\left(  b-s\right)  BB^{\ast}S^{\ast}\left(  b-s\right)  ds,$ we have only
one controllability operator.
\end{remark}

\begin{theorem}
\label{Thm:1}The following conditions are equivalent.

\begin{enumerate}
\item[(\ref{Thm:1}a)] System (\ref{im1}) is approximately controllable on
$\left[  0,b\right]  $.

\item[(\ref{Thm:1}b)] $M^{\ast}\varphi=0$ implies that $\varphi=0.$

\item[(\ref{Thm:1}c)] $\Theta_{0}^{t_{p}}+\Gamma_{t_{p}}^{b}+\widetilde
{\Theta}_{0}^{t_{p}}+\widetilde{\Gamma}_{t_{p}}^{b}$ is positive.

\item[(\ref{Thm:1}d)] $\varepsilon\left(  \varepsilon I+\Theta_{0}^{t_{p}%
}+\Gamma_{t_{p}}^{b}+\widetilde{\Theta}_{0}^{t_{p}}+\widetilde{\Gamma}_{t_{p}%
}^{b}\right)  ^{-1}$ converges to zero operator as $\varepsilon\rightarrow
0^{+}$ in strong operator topology.

\item[(\ref{Thm:1}e)] $\varepsilon\left(  \varepsilon I+\Theta_{0}^{t_{p}%
}+\Gamma_{t_{p}}^{b}+\widetilde{\Theta}_{0}^{t_{p}}+\widetilde{\Gamma}_{t_{p}%
}^{b}\right)  ^{-1}$ converges to zero operator as $\varepsilon\rightarrow
0^{+}$ in weak operator topology.
\end{enumerate}
\end{theorem}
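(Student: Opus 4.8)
The plan is to prove the chain of equivalences (\ref{Thm:1}a)$\Leftrightarrow$(\ref{Thm:1}b)$\Leftrightarrow$(\ref{Thm:1}c)$\Leftrightarrow$(\ref{Thm:1}d)$\Leftrightarrow$(\ref{Thm:1}e) by a cyclic argument, relying throughout on Lemma \ref{Lem:3} and the explicit form of $M^{\ast}$ computed above. First I would record that, with $x(0)=0$, the reachable set at time $b$ from the control $w=(u,\{v_k\})$ is exactly $\{Mw : w\in L^{2}([0,b],U)\times U^{p}\}$, so approximate controllability on $[0,b]$ is precisely the statement that $\overline{\mathrm{Ran}\,M}=H$. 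The standard functional-analytic fact that $\overline{\mathrm{Ran}\,M}=H$ iff $M^{\ast}$ is injective then gives (\ref{Thm:1}a)$\Leftrightarrow$(\ref{Thm:1}b) immediately.

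Next, for (\ref{Thm:1}b)$\Leftrightarrow$(\ref{Thm:1}c), I would use the identity $MM^{\ast}=\Theta_{0}^{t_{p}}+\Gamma_{t_{p}}^{b}+\widetilde{\Theta}_{0}^{t_{p}}+\widetilde{\Gamma}_{t_{p}}^{b}$ together with the elementary observation that $\langle MM^{\ast}\varphi,\varphi\rangle=\|M^{\ast}\varphi\|_{1}^{2}$. Hence the nonnegative self-adjoint operator $MM^{\ast}$ is positive (i.e.\ $\langle MM^{\ast}\varphi,\varphi\rangle>0$ for all $\varphi\neq0$) if and only if $M^{\ast}\varphi=0\Rightarrow\varphi=0$, which is (\ref{Thm:1}b). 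For (\ref{Thm:1}c)$\Leftrightarrow$(\ref{Thm:1}d) I would invoke the known resolvent characterization for nonnegative self-adjoint operators $\Lambda$ on a Hilbert space: $\varepsilon(\varepsilon I+\Lambda)^{-1}\to0$ strongly as $\varepsilon\to0^{+}$ if and only if $\Lambda$ is positive. One direction follows from the spectral theorem (or directly from $\|\varepsilon(\varepsilon I+\Lambda)^{-1}\varphi\|^{2}\le\varepsilon\langle\varphi-\varepsilon(\varepsilon I+\Lambda)^{-1}\varphi,\varphi\rangle\cdot\varepsilon^{-1}$-type estimates), while the converse uses that if $\Lambda\varphi=0$ for some $\varphi\neq0$ then $\varepsilon(\varepsilon I+\Lambda)^{-1}\varphi=\varphi$ does not go to zero; I will cite \cite{basmah} or \cite{mah} for this lemma as it is the engine of the resolvent method. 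The implication (\ref{Thm:1}d)$\Rightarrow$(\ref{Thm:1}e) is trivial since strong convergence implies weak convergence.

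The one genuinely nontrivial step is (\ref{Thm:1}e)$\Rightarrow$(\ref{Thm:1}b), closing the cycle. Here I would argue by contraposition: suppose $M^{\ast}\varphi=0$ with $\varphi\neq0$; then $\Lambda\varphi=0$ where $\Lambda:=\Theta_{0}^{t_{p}}+\Gamma_{t_{p}}^{b}+\widetilde{\Theta}_{0}^{t_{p}}+\widetilde{\Gamma}_{t_{p}}^{b}$, so $(\varepsilon I+\Lambda)^{-1}\varphi=\varepsilon^{-1}\varphi$ and hence $\varepsilon(\varepsilon I+\Lambda)^{-1}\varphi=\varphi$ for every $\varepsilon>0$; then $\langle\varepsilon(\varepsilon I+\Lambda)^{-1}\varphi,\varphi\rangle=\|\varphi\|^{2}\neq0$, contradicting weak convergence to the zero operator. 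This shows (\ref{Thm:1}e)$\Rightarrow$(\ref{Thm:1}b) and completes the cycle; combined with the trivial (\ref{Thm:1}d)$\Rightarrow$(\ref{Thm:1}e) this actually also yields (\ref{Thm:1}d)$\Leftrightarrow$(\ref{Thm:1}e) for free.

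I expect the main obstacle to be purely expository rather than mathematical: being careful that $M$ is the correct controllability operator — that is, verifying from Lemma \ref{Lem:1} that $x(b)$ with $x(0)=0$ equals precisely $Mw$, matching the block for $t_{p}<t\le b$ (the $\Gamma_{t_{p}}^{b}$ and $\widetilde{\Gamma}_{t_{p}}^{b}$ contributions) with the accumulated impulsive/semigroup product for $0\le t\le t_{p}$ (the $\Theta_{0}^{t_{p}}$ and $\widetilde{\Theta}_{0}^{t_{p}}$ contributions) — and that the adjoint $M^{\ast}$ has exactly the piecewise form displayed before the theorem. Once $M$ and $M^{\ast}$ are pinned down, everything reduces to the standard Hilbert-space dichotomy $\overline{\mathrm{Ran}\,M}=H\Leftrightarrow\ker M^{\ast}=\{0\}$ and the resolvent lemma, with no semigroup-specific difficulty remaining.
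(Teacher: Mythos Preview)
Your argument is correct, but it takes a different route from the paper for the key equivalence involving (\ref{Thm:1}d). The paper treats (\ref{Thm:1}a)$\Leftrightarrow$(\ref{Thm:1}b) and (\ref{Thm:1}a)$\Leftrightarrow$(\ref{Thm:1}c) as standard (the latter cited from \cite{zabczyk}), and observes (\ref{Thm:1}d)$\Leftrightarrow$(\ref{Thm:1}e) from nonnegativity of $\varepsilon(\varepsilon I+\Lambda)^{-1}$, but then proves (\ref{Thm:1}a)$\Leftrightarrow$(\ref{Thm:1}d) \emph{constructively}: it introduces the strictly convex functional
\[
J_{\varepsilon}(\varphi)=\tfrac12\|M^{\ast}\varphi\|^{2}+\tfrac{\varepsilon}{2}\|\varphi\|^{2}-\Big\langle\varphi,\,h-S(b-t_{p})\prod_{j=p}^{1}S_{C}(t_{j},t_{j-1})x_{0}\Big\rangle,
\]
computes its unique minimizer $\widehat{\varphi}_{\varepsilon}=(\varepsilon I+\Lambda)^{-1}\bigl(h-S(b-t_{p})\prod S_{C}(t_{j},t_{j-1})x_{0}\bigr)$, defines explicit controls $u^{\varepsilon}=B^{\ast}\psi_{\varepsilon}(\cdot)$ and $v_{k}^{\varepsilon}=D_{k}^{\ast}\psi_{\varepsilon}(t_{k}^{+})$ from the adjoint trajectory with terminal datum $\widehat{\varphi}_{\varepsilon}$, and verifies the identity $x_{\varepsilon}(b)-h=-\varepsilon\widehat{\varphi}_{\varepsilon}$, from which (\ref{Thm:1}a)$\Leftrightarrow$(\ref{Thm:1}d) is read off directly. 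By contrast, you reach (\ref{Thm:1}d) purely operator-theoretically via (\ref{Thm:1}b)$\Leftrightarrow$(\ref{Thm:1}c) (from $\langle MM^{\ast}\varphi,\varphi\rangle=\|M^{\ast}\varphi\|^{2}$) and then the abstract resolvent lemma for nonnegative self-adjoint $\Lambda$; your (\ref{Thm:1}e)$\Rightarrow$(\ref{Thm:1}b) contraposition is also a clean way to close the cycle. Your path is shorter and perfectly valid; what the paper's variational detour buys is an explicit steering control $w^{\varepsilon}=M^{\ast}\widehat{\varphi}_{\varepsilon}$ and the error formula $x_{\varepsilon}(b)-h=-\varepsilon(\varepsilon I+\Lambda)^{-1}(h-\cdots)$, which is precisely the machinery one re-uses when extending the result to semilinear perturbations as in \cite{basmah,mah}.
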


\begin{proof}
The proof of the equivalence (\ref{Thm:1}a)$\Longleftrightarrow$(\ref{Thm:1}b)
is standard. Approximately controllability of system (\ref{im1}) on $\left[
0,b\right]  $ is equivalent to Im$M$ is dense in $H$. That means, the kernel
of $M^{\ast}$ is trivial in $H$. Equivalently, $M^{\ast}\varphi=\left(
B^{\ast}\psi\left(  \cdot\right)  ,\left\{  D_{k}^{\ast}\psi\left(  t_{k}%
^{+}\right)  \right\}  _{k=1}^{p}\right)  =0$ implies that $\varphi=0.$ The
equivalence (\ref{Thm:1}a)$\Longleftrightarrow$(\ref{Thm:1}c) is well known,
see \cite{zabczyk} page 207. The equivalence (\ref{Thm:1}%
d)$\Longleftrightarrow$(\ref{Thm:1}e) is a consequence of nonnegativity of
$\varepsilon\left(  \varepsilon I+\Theta_{0}^{t_{p}}+\Gamma_{t_{p}}%
^{b}+\widetilde{\Theta}_{0}^{t_{p}}+\widetilde{\Gamma}_{t_{p}}^{b}\right)
^{-1}$. We prove only (\ref{Thm:1}a)$\Longleftrightarrow$(\ref{Thm:1}d). To do
so, consider the functional%
\[
J_{\varepsilon}\left(  \varphi\right)  =\frac{1}{2}\left\Vert M^{\ast}%
\varphi\right\Vert ^{2}+\frac{\varepsilon}{2}\left\Vert \varphi\right\Vert
^{2}-\left\langle \varphi,h-S\left(  b-t_{p}\right)
{\displaystyle\prod_{j=p}^{1}}
S_{C}\left(  t_{j},t_{j-1}\right)  x_{0}\right\rangle
\]
The map $\varphi\rightarrow J_{\varepsilon}\left(  \varphi\right)  $ is
continuous and strictly convex. The functional $J_{\varepsilon}\left(
\cdot\right)  $ admits a unique minimum $\widehat{\varphi}_{\varepsilon}$ that
defines a map $\Phi:X\rightarrow X$. Since $J_{\varepsilon}\left(
\varphi\right)  $ is Frechet differentiable at $\widehat{\varphi}%
_{\varepsilon},$ by the optimality of $\widehat{\varphi}_{\varepsilon}$, we
must have%
\begin{equation}
\frac{d}{d\varphi}J_{\varepsilon}\left(  \varphi\right)  =\Theta_{0}^{t_{p}%
}\widehat{\varphi}_{\varepsilon}+\Gamma_{t_{p}}^{b}\widehat{\varphi
}_{\varepsilon}+\widetilde{\Theta}_{0}^{t_{p}}\widehat{\varphi}_{\varepsilon
}+\widetilde{\Gamma}_{t_{p}}^{b}\widehat{\varphi}_{\varepsilon}+\varepsilon
\widehat{\varphi}_{\varepsilon}-h+S\left(  b-t_{p}\right)
{\displaystyle\prod_{j=p}^{1}}
S_{C}\left(  t_{j},t_{j-1}\right)  x_{0}=0, \label{wq1}%
\end{equation}
By solving (\ref{wq1}) for $\widehat{\varphi}_{\varepsilon},$ we get%
\begin{equation}
\widehat{\varphi}_{\varepsilon}=\left(  \varepsilon I+\Theta_{0}^{t_{p}%
}+\Gamma_{t_{p}}^{b}+\widetilde{\Theta}_{0}^{t_{p}}+\widetilde{\Gamma}_{t_{p}%
}^{b}\right)  ^{-1}\left(  h-S\left(  b-t_{p}\right)
{\displaystyle\prod_{j=p}^{1}}
S_{C}\left(  t_{j},t_{j-1}\right)  x_{0}\right)  . \label{wq3}%
\end{equation}
Defining $u^{\varepsilon}\left(  s\right)  $ and $\left\{  v_{k}^{\varepsilon
}\right\}  _{k=1}^{p}$ as follows
\begin{align*}
u^{\varepsilon}\left(  s\right)   &  =\left(
{\displaystyle\sum_{k=1}^{p}}
B^{\ast}S_{C}^{\ast}\left(  t_{k},s\right)  \prod_{i=k+1}^{p}S_{C}^{\ast
}\left(  t_{i},t_{i-1}\right)  S^{\ast}\left(  b-t_{p}\right)  \chi_{\left(
t_{k-1},t_{k}\right)  }+B^{\ast}S^{\ast}\left(  b-s\right)  \chi_{\left(
t_{p},b\right)  }\right)  \widehat{\varphi}_{\varepsilon},\\
v_{p}^{\varepsilon}  &  =D_{p}^{\ast}S^{\ast}\left(  b-t_{p}\right)
\widehat{\varphi}_{\varepsilon},\ \ v_{k}^{\varepsilon}=D_{k}^{\ast}%
\prod_{i=k}^{p}S_{C}^{\ast}\left(  t_{i},t_{i-1}\right)  S^{\ast}\left(
b-t_{p}\right)  \widehat{\varphi}_{\varepsilon},\ k=1,...,p-1,
\end{align*}
we get from (\ref{wq1}) and (\ref{wq3}) that%
\begin{equation}
x_{\varepsilon}\left(  b\right)  -h=-\varepsilon\widehat{\varphi}%
_{\varepsilon}=-\varepsilon\left(  \varepsilon I+\Theta_{0}^{t_{p}}%
+\Gamma_{t_{p}}^{b}+\widetilde{\Theta}_{0}^{t_{p}}+\widetilde{\Gamma}_{t_{p}%
}^{b}\right)  ^{-1}\left(  h-S\left(  b-t_{p}\right)
{\displaystyle\prod_{j=p}^{1}}
S_{C}\left(  t_{j},t_{j-1}\right)  x_{0}\right)  , \label{wq2}%
\end{equation}
where
\[
x_{\varepsilon}\left(  b\right)  =x\left(  b;x_{0},u^{\varepsilon},\left\{
v_{k}^{\varepsilon}\right\}  _{k=1}^{p}\right)  =S\left(  b-t_{p}\right)
{\displaystyle\prod_{j=p}^{1}}
S_{C}\left(  t_{j},t_{j-1}\right)  x_{0}+\Theta_{0}^{t_{p}}\widehat{\varphi
}_{\varepsilon}+\Gamma_{t_{p}}^{b}\widehat{\varphi}_{\varepsilon}%
+\widetilde{\Theta}_{0}^{t_{p}}\widehat{\varphi}_{\varepsilon}+\widetilde
{\Gamma}_{t_{p}}^{b}\widehat{\varphi}_{\varepsilon}.
\]
Now, the equivalence (\ref{Thm:1}a)$\Longleftrightarrow$(\ref{Thm:1}d) follows
immediately from (\ref{wq2}).
\end{proof}

\begin{corollary}
\label{Cor:1}If one of the operators $\Theta_{0}^{t_{p}},\Gamma_{t_{p}}%
^{b},\widetilde{\Theta}_{0}^{t_{p}},\widetilde{\Gamma}_{t_{p}}^{b}$ is
positive, then $\Theta_{0}^{t_{p}}+\Gamma_{t_{p}}^{b}+\widetilde{\Theta}%
_{0}^{t_{p}}+\widetilde{\Gamma}_{t_{p}}^{b}$ is positive, and consequently the
system (\ref{im1}) is approximately controllable on $\left[  0,b\right]  .$
\end{corollary}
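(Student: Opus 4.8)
The plan is to reduce everything to the equivalence (\ref{Thm:1}a)$\Longleftrightarrow$(\ref{Thm:1}c) of Theorem~\ref{Thm:1}: it is enough to show that the hypothesis forces $\Theta_{0}^{t_{p}}+\Gamma_{t_{p}}^{b}+\widetilde{\Theta}_{0}^{t_{p}}+\widetilde{\Gamma}_{t_{p}}^{b}$ to be a positive operator, and approximate controllability of (\ref{im1}) on $[0,b]$ then follows at once. The preliminary step, and really the only substantive one, is to record that each of the four operators $\Gamma_{t_{p}}^{b}$, $\widetilde{\Gamma}_{t_{p}}^{b}$, $\Theta_{0}^{t_{p}}$, $\widetilde{\Theta}_{0}^{t_{p}}$ is self-adjoint and nonnegative. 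For the first two this is transparent, since $\langle\Gamma_{t_{p}}^{b}\varphi,\varphi\rangle=\int_{t_{p}}^{b}\Vert B^{\ast}S^{\ast}(b-s)\varphi\Vert^{2}\,ds\ge0$ and $\langle\widetilde{\Gamma}_{t_{p}}^{b}\varphi,\varphi\rangle=\Vert D_{p}^{\ast}S^{\ast}(b-t_{p})\varphi\Vert^{2}\ge0$.

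For $\Theta_{0}^{t_{p}}$ and $\widetilde{\Theta}_{0}^{t_{p}}$ the point is to notice that in each summand the operator factor standing to the left of $BB^{\ast}$ (resp. $D_{i-1}D_{i-1}^{\ast}$) is exactly the adjoint of the operator factor standing to its right. Concretely, with $N_{i}:=S(b-t_{p})\prod_{j=p}^{i+1}S_{C}(t_{j},t_{j-1})$ one has $N_{i}^{\ast}=\prod_{k=i+1}^{p}S_{C}^{\ast}(t_{k},t_{k-1})S^{\ast}(b-t_{p})$, while $S_{C}(t_{i},s)$ inside the integral is paired with its adjoint $S_{C}^{\ast}(t_{i},s)$; hence the $i$th term of $\Theta_{0}^{t_{p}}$ equals $N_{i}\big(\int_{t_{i-1}}^{t_{i}}S_{C}(t_{i},s)BB^{\ast}S_{C}^{\ast}(t_{i},s)\,ds\big)N_{i}^{\ast}$, which is nonnegative because the middle factor is. Summing over $i$ gives $\Theta_{0}^{t_{p}}\ge0$, and the identical bookkeeping — now with $P_{i}:=S(b-t_{p})\prod_{j=p}^{i}S_{C}(t_{j},t_{j-1})D_{i-1}$, so that the $i$th term of $\widetilde{\Theta}_{0}^{t_{p}}$ is $P_{i}P_{i}^{\ast}$ — gives $\widetilde{\Theta}_{0}^{t_{p}}\ge0$. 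Keeping these reversed products straight is the only place where any care is required.

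Granting nonnegativity of all four operators, the corollary is immediate. If, say, $\widetilde{\Gamma}_{t_{p}}^{b}$ is positive, then for every $\varphi\ne0$
\[
\big\langle\big(\Theta_{0}^{t_{p}}+\Gamma_{t_{p}}^{b}+\widetilde{\Theta}_{0}^{t_{p}}+\widetilde{\Gamma}_{t_{p}}^{b}\big)\varphi,\varphi\big\rangle\ge\big\langle\widetilde{\Gamma}_{t_{p}}^{b}\varphi,\varphi\big\rangle>0,
\]
so the sum is a positive operator, and the same one-line estimate covers the cases in which $\Theta_{0}^{t_{p}}$, $\Gamma_{t_{p}}^{b}$ or $\widetilde{\Theta}_{0}^{t_{p}}$ is the positive summand. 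By (\ref{Thm:1}c)$\Longleftrightarrow$(\ref{Thm:1}a) of Theorem~\ref{Thm:1}, system (\ref{im1}) is approximately controllable on $[0,b]$. The main obstacle — if one can even call it that — is purely notational, namely verifying the $NQN^{\ast}$ structure of $\Theta_{0}^{t_{p}}$ and $\widetilde{\Theta}_{0}^{t_{p}}$; once that is in hand there is nothing left to prove.
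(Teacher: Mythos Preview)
Your proof is correct and follows exactly the paper's approach: observe that the four operators are nonnegative, so positivity of one forces positivity of the sum, and then invoke the equivalence (\ref{Thm:1}a)$\Longleftrightarrow$(\ref{Thm:1}c) of Theorem~\ref{Thm:1}. The only difference is that the paper simply asserts nonnegativity of $\Theta_{0}^{t_{p}}$, $\Gamma_{t_{p}}^{b}$, $\widetilde{\Theta}_{0}^{t_{p}}$, $\widetilde{\Gamma}_{t_{p}}^{b}$ without justification, whereas you spell out the $NQN^{\ast}$ structure of each summand; this extra detail is welcome but not a different argument.
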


\begin{proof}
The operators $\Theta_{0}^{t_{p}},\Gamma_{t_{p}}^{b},\widetilde{\Theta}%
_{0}^{t_{p}},\widetilde{\Gamma}_{t_{p}}^{b}$ are nonnegative. So, if one of
them is positive then the sum $\Theta_{0}^{t_{p}}+\Gamma_{t_{p}}%
^{b}+\widetilde{\Theta}_{0}^{t_{p}}+\widetilde{\Gamma}_{t_{p}}^{b}$ is
positive. By Theorem \ref{Thm:1}c, we get the approximate controllabiliy of
(\ref{im1}) on $\left[  0,b\right]  .$
\end{proof}

\begin{corollary}
Assume $A:H\rightarrow H$ is a linear bounded operator. System (\ref{im1}) is
approximately controllable on $\left[  0,b\right]  $ if
\begin{equation}
\overline{sp\left\{  A^{n}BU:n=0,1,2,...\right\}  }=H. \label{apc1}%
\end{equation}

\end{corollary}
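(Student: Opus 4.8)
The plan is to obtain the conclusion from Corollary \ref{Cor:1} by showing that, under hypothesis (\ref{apc1}), the operator $\Gamma_{t_{p}}^{b}$ alone is positive. Since $t_{p}<t_{p+1}=b$, the interval $[t_{p},b]$ is nondegenerate, and for $\varphi\in H$ one has
\[
\left\langle \Gamma_{t_{p}}^{b}\varphi,\varphi\right\rangle =\int_{t_{p}}^{b}\left\Vert B^{\ast}S^{\ast}\left(b-s\right)\varphi\right\Vert^{2}ds .
\]
Hence $\left\langle \Gamma_{t_{p}}^{b}\varphi,\varphi\right\rangle=0$ forces $B^{\ast}S^{\ast}(b-s)\varphi=0$ for almost every $s\in[t_{p},b]$, and therefore, by strong continuity of $S^{\ast}(\cdot)$, for every $s\in[t_{p},b]$. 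Substituting $\tau=b-s$ gives $B^{\ast}S^{\ast}(\tau)\varphi=0$ for all $\tau\in[0,b-t_{p}]$.

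Next I would invoke the boundedness of $A$. Then $S(t)=e^{tA}$ and $S^{\ast}(t)=e^{tA^{\ast}}$ are entire operator-valued functions of $t$, so $\tau\mapsto B^{\ast}S^{\ast}(\tau)\varphi$ is real-analytic on a neighborhood of $[0,b-t_{p}]$ and vanishes identically there. Differentiating $n$ times at $\tau=0$ yields $B^{\ast}(A^{\ast})^{n}\varphi=0$ for every $n=0,1,2,\dots$, which is equivalent to $\left\langle \varphi,A^{n}Bu\right\rangle=0$ for all $u\in U$ and all $n\geq0$. Thus $\varphi$ is orthogonal to $sp\{A^{n}BU:n=0,1,2,\dots\}$ and to its closure, so by (\ref{apc1}) this closure is $H$ and $\varphi=0$. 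Consequently $\Gamma_{t_{p}}^{b}$ is positive, and Corollary \ref{Cor:1} gives approximate controllability of (\ref{im1}) on $[0,b]$.

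The only step needing genuine care is the passage from ``$B^{\ast}S^{\ast}(\tau)\varphi=0$ on $[0,b-t_{p}]$'' to ``$B^{\ast}(A^{\ast})^{n}\varphi=0$ for all $n$'': this is precisely where the boundedness of $A$, hence the analyticity of the semigroup, is used, and it is the reason the simple Kalman-type rank condition (\ref{apc1}) suffices here; for a general $C_{0}$-semigroup one would instead need a Fattorini-type spectral criterion. All remaining manipulations (the Gramian identity, the a.e.-to-everywhere upgrade by continuity, the adjoint bookkeeping $\langle\varphi,A^{n}Bu\rangle=\langle B^{\ast}(A^{\ast})^{n}\varphi,u\rangle$) are routine.
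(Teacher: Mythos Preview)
Your proof is correct and follows essentially the same route as the paper: both isolate the last-interval Gramian $\Gamma_{t_{p}}^{b}$, deduce $B^{\ast}S^{\ast}(b-s)\varphi=0$ on $[t_{p},b]$, differentiate (using boundedness of $A$) to obtain $B^{\ast}(A^{\ast})^{n}\varphi=0$ for all $n$, and conclude $\varphi=0$ from (\ref{apc1}). The only cosmetic difference is packaging---you argue directly that $\Gamma_{t_{p}}^{b}$ is positive and invoke Corollary~\ref{Cor:1}, whereas the paper phrases the same computation as a contradiction to density of $\operatorname{Im}M$ and cites \cite{triggiani2} for the equivalence of (\ref{apc1}) with $\bigcap_{n\geq0}\ker B^{\ast}(A^{\ast})^{n}=\{0\}$.
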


\begin{proof}
Suppose by contradiction that%
\[
\text{Im}M=\left\{  x\left(  b\right)  =x\left(  b;0,u,\left\{  v_{k}\right\}
_{k=1}^{p}\right)  :\left(  u,\left\{  v_{k}\right\}  _{k=1}^{p}\right)  \in
L^{2}\left(  \left[  0,b\right]  ,U\right)  \times U^{p}\right\}
\]
is not dense in $H,$ then for some nonzero $\varphi\in H$ $\left\langle
x\left(  b\right)  ,\varphi\right\rangle =0:$
\[
\left\langle x\left(  b\right)  ,\varphi\right\rangle =%
{\displaystyle\int_{0}^{b}}
\left\langle Bu\left(  s\right)  ,\psi\left(  s\right)  \right\rangle ds+%
{\displaystyle\sum_{k=1}^{p}}
\left\langle D_{k}v_{k},\psi\left(  t_{k}^{+}\right)  \right\rangle
=0,\ \text{for any }\left(  u,\left\{  v_{k}\right\}  _{k=1}^{p}\right)  \in
L^{2}\left(  \left[  0,b\right]  ,U\right)  \times U^{p},
\]
where $\psi$ is a solution (\ref{ad2}) of the adjoint equation with
$\varphi\neq0$. This easily leads to
\[
\left\Vert M^{\ast}\varphi\right\Vert ^{2}=\left\langle \left(  \Theta
_{0}^{t_{p}}+\Gamma_{t_{p}}^{b}+\widetilde{\Theta}_{0}^{t_{p}}+\widetilde
{\Gamma}_{t_{p}}^{b}\right)  \varphi,\varphi\right\rangle =0\Longrightarrow
\Gamma_{t_{p}}^{b}\varphi=0\Longrightarrow B^{\ast}S^{\ast}\left(  b-s\right)
\varphi=0,\ \ t_{p}\leq s\leq b.
\]
We differentiate successively this last identity to show, by induction, that
$B^{\ast}\varphi=B^{\ast}A^{\ast}\varphi=...=B^{\ast}\left(  A^{\ast}\right)
^{n}\varphi=0,\ \ n=0,1,2,...$Therefore $0\neq\varphi\in\cap_{n=0}^{\infty
}\ker\left\{  B^{\ast}\left(  A^{\ast}\right)  ^{n}\right\}  .$ But the
condition (\ref{apc1}) is equivalent to $\cap_{n=0}^{\infty}\ker\left\{
B^{\ast}\left(  A^{\ast}\right)  ^{n}\right\}  =0$ see \cite{triggiani2}. This
contradiction proves that system (\ref{im1}) is approximately controllable on
$\left[  0,b\right]  $.
\end{proof}

Nonimpulsive analogue of the following wave equation is given in
\cite{zabczyk}.

\begin{theorem}
If $b-t_{p}\geq2\pi,$ $\gamma_{m}\neq0\ $for $m=1,2,...,$ then system%
\begin{equation}
\left\{
\begin{array}
[c]{c}%
\begin{tabular}
[c]{lll}%
$\dfrac{\partial^{2}x\left(  t,\theta\right)  }{\partial t^{2}}=\dfrac
{\partial^{2}x\left(  t,\theta\right)  }{\partial\theta^{2}}+hu\left(
t\right)  ,$ &  & \\
$x\left(  t,0\right)  =x\left(  t,\pi\right)  =0,$ &  & \\
$x\left(  0,\theta\right)  =a\left(  \theta\right)  ,\ \ \dfrac{\partial
x\left(  0,\theta\right)  }{\partial t}=b\left(  \theta\right)  ,$ &  &
\end{tabular}
\\
\Delta x\left(  t_{i},\theta\right)  =a_{i}\left(  \theta\right)
,\ \ \ \Delta\dfrac{\partial x\left(  t_{i},\theta\right)  }{\partial t}%
=b_{i}\left(  \theta\right)  ,\ \ i=1,...,p
\end{array}
\right.  \label{wave}%
\end{equation}
is approximately controllable on $\left[  0,b\right]  $.
\end{theorem}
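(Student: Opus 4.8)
The plan is to exhibit (\ref{wave}) as a special instance of (\ref{im1}) and then invoke Corollary \ref{Cor:1}: it will be enough to show that the single operator $\Gamma_{t_p}^b$ is positive, because this operator is built only from $A$ and $B$ and is insensitive to the impulsive data.

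First I would pass to the first-order formulation on the energy space $H=H_0^1(0,\pi)\times L^2(0,\pi)$ with the inner product $\langle(f_1,g_1),(f_2,g_2)\rangle=\int_0^\pi\left(f_1'f_2'+g_1g_2\right)d\theta$. Writing $y=(x,\partial x/\partial t)$ turns (\ref{wave}) into $y'(t)=Ay(t)+Bu(t)$, where $A=\left(\begin{smallmatrix}0&I\\\partial_\theta^2&0\end{smallmatrix}\right)$ on the Dirichlet domain and $Bu=(0,h)u$ for scalar controls $u(t)$; here $A$ is skew-adjoint, so it generates a unitary $C_0$-group $S(t)$ and $S^\ast(t)=S(-t)$. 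The impulse conditions $\Delta x(t_i)=a_i$, $\Delta(\partial x/\partial t)(t_i)=b_i$ fit the template $\Delta y(t_i)=C_iy(t_i)+D_iv_i$ with $C_i=0$ (and suitable $D_i$); whatever the precise status of the data $a_i,b_i$ — prescribed jumps only translate the reachable set, preserving density — the operators $A$, $B$, and hence $\Gamma_{t_p}^b=\int_{t_p}^bS(b-s)BB^\ast S^\ast(b-s)\,ds$, are unchanged, so by Corollary \ref{Cor:1} it suffices to prove that $\Gamma_{t_p}^b$ is positive.

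Next I would diagonalize in the Dirichlet basis $e_m(\theta)=\sqrt{2/\pi}\,\sin m\theta$ ($\partial_\theta^2 e_m=-m^2e_m$), writing $h=\sum_m\gamma_m e_m$ with $\gamma_m=\langle h,e_m\rangle_{L^2}$ and $\varphi=\sum_m(\alpha_m e_m,\beta_m e_m)\in H$. A mode-by-mode computation of the group shows that the second component of $S^\ast(\tau)\varphi=S(-\tau)\varphi$ has $m$-th Fourier coefficient $m\alpha_m\sin m\tau+\beta_m\cos m\tau$, so
\[
B^\ast S^\ast(\tau)\varphi=\sum_m\gamma_m\bigl(m\alpha_m\sin m\tau+\beta_m\cos m\tau\bigr),
\]
the series converging absolutely and uniformly by Cauchy--Schwarz (using $h\in L^2(0,\pi)$ and $\sum_m(m^2|\alpha_m|^2+|\beta_m|^2)<\infty$). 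Hence, with $\tau=b-s$,
\[
\langle\Gamma_{t_p}^b\varphi,\varphi\rangle=\int_{t_p}^b\bigl|B^\ast S^\ast(b-s)\varphi\bigr|^2ds=\int_0^{b-t_p}\Bigl|\sum_m\gamma_m\bigl(m\alpha_m\sin m\tau+\beta_m\cos m\tau\bigr)\Bigr|^2d\tau .
\]
If this vanishes, the (continuous) integrand is identically $0$ on $[0,b-t_p]$, in particular on $[0,2\pi]$, where $\{\cos m\tau,\sin m\tau:m\geq1\}$ is an orthogonal system; reading off the Fourier coefficients gives $\gamma_m m\alpha_m=0$ and $\gamma_m\beta_m=0$ for every $m\geq1$. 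Since $\gamma_m\neq0$ and $m\geq1$, this forces $\alpha_m=\beta_m=0$ for all $m$, i.e. $\varphi=0$. Thus $\Gamma_{t_p}^b$ is positive, and Corollary \ref{Cor:1} yields approximate controllability of (\ref{wave}) on $[0,b]$.

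The one genuinely delicate step is the passage from $\langle\Gamma_{t_p}^b\varphi,\varphi\rangle=0$ to the vanishing of every trigonometric coefficient: this is exactly where the hypothesis $b-t_p\geq2\pi$ is used, since on a shorter interval the functions $\tau\mapsto\sin m\tau$, $\cos m\tau$ are no longer mutually orthogonal (while one full period is already enough). One should also take modest care with the interchange of summation and integration and with the step from ``zero almost everywhere'' to ``identically zero,'' both of which follow from the uniform convergence noted above, and one should check that $B$ is bounded, which is why $h\in L^2(0,\pi)$ is implicitly assumed.
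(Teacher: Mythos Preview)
Your proposal is correct and follows essentially the same route as the paper: cast the wave equation in first-order form on the energy space, compute $B^{\ast}S^{\ast}(b-t)\varphi$ as the trigonometric series $\sum_m\gamma_m(m\alpha_m\sin m(b-t)+\beta_m\cos m(b-t))$, use the hypothesis $b-t_p\ge 2\pi$ to read off its Fourier coefficients and force $\alpha_m=\beta_m=0$ from $\gamma_m\neq 0$, and then invoke Corollary~\ref{Cor:1} via positivity of $\Gamma_{t_p}^b$. Your extra care about uniform convergence, the boundedness of $B$, and the role of the prescribed impulsive jumps is welcome but does not depart from the paper's argument.
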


\begin{proof}
We identify functions $a\left(  \theta\right)  $ and $b\left(  \theta\right)
$ with their Fourier expansions%
\begin{equation}
a\left(  \theta\right)  =\sum_{m=1}^{\infty}\alpha_{m}\sin m\theta
,\ \ \ b\left(  \theta\right)  =\sum_{m=1}^{\infty}\beta_{m}\sin
m\theta,\ \ \theta\in\left(  0,\pi\right)  . \label{exp1}%
\end{equation}
It is easy to check that%
\[
x\left(  t,\theta\right)  =\sum_{m=1}^{\infty}\left(  \alpha_{m}\cos
mt+\frac{\beta_{m}}{m}\sin mt\right)  \sin m\theta,\ \ \dfrac{\partial
x\left(  t,\theta\right)  }{\partial t}=\sum_{m=1}^{\infty}\left(
-m\alpha_{m}\sin mt+\beta_{m}\cos mt\right)  \sin m\theta.
\]
We define $H$ to be the set of pairs $\left[
\begin{array}
[c]{c}%
a\\
b
\end{array}
\right]  $ of functions with expansions (\ref{exp1}) such that $\sum
_{m=1}^{\infty}\left(  m^{2}\left\vert \alpha_{m}\right\vert ^{2}+\left\vert
\beta_{m}\right\vert ^{2}\right)  <\infty.$ This is a Hilbert space with
scalar product $\left\langle \left[
\begin{array}
[c]{c}%
a\\
b
\end{array}
\right]  ,\left[
\begin{array}
[c]{c}%
\widetilde{a}\\
\widetilde{b}%
\end{array}
\right]  \right\rangle =\sum_{m=1}^{\infty}\left(  m^{2}\alpha_{m}%
\widetilde{\alpha}_{m}+\beta_{m}\widetilde{\beta}_{m}\right)  .$ The semigroup
of solutions to the wave equation (\ref{wave}) is defined as follows:%
\[
S\left(  t\right)  \left[
\begin{array}
[c]{c}%
a\\
b
\end{array}
\right]  =\sum_{m=1}^{\infty}\left[
\begin{tabular}
[c]{ll}%
$\cos mt$ & $\frac{1}{m}\sin mt$\\
$-m\sin mt$ & $\cos mt$%
\end{tabular}
\ \ \ \ \ \ \right]  \left[
\begin{array}
[c]{c}%
\alpha_{m}\\
\beta_{m}%
\end{array}
\right]  \sin m\left(  \cdot\right)  ,\ \ t\geq0.
\]
The formula is meaningful for all $t\in R$ and $S^{\ast}\left(  t\right)
=S^{-1}\left(  t\right)  =S\left(  -t\right)  ,\ \ t\in R.$ It is known that
the problem can be written as follows:%
\[
\left[
\begin{array}
[c]{c}%
y_{1}\left(  t\right) \\
y_{2}\left(  t\right)
\end{array}
\right]  =S\left(  t\right)  \left[
\begin{array}
[c]{c}%
a\\
b
\end{array}
\right]  +\int_{0}^{t}S\left(  t-s\right)  \left[
\begin{array}
[c]{c}%
0\\
h
\end{array}
\right]  u\left(  s\right)  ds.
\]
In our case $U=R$ and the operator $B:R\rightarrow H$ is given by $Bu=\left[
\begin{array}
[c]{c}%
0\\
h
\end{array}
\right]  u,\ \ u\in R.$ Since $S^{\ast}\left(  t\right)  =S\left(  -t\right)
,\ t\geq0,$%
\[
B^{\ast}S^{\ast}\left(  b-t\right)  \left[
\begin{array}
[c]{c}%
a\\
b
\end{array}
\right]  =\sum_{m=1}^{\infty}\gamma_{m}\left(  m\alpha_{m}\sin m\left(
b-t\right)  +\beta_{m}\cos m\left(  b-t\right)  \right)  ,\ \ t_{p}\leq t\leq
b.
\]
It is easy to see that the series on the right hand side, when is denoted by
$\phi\left(  t\right)  ,$ $0\leq t\leq b-t_{p},$ defines a continuous,
periodic function with period $2\pi$. Moreover,%
\[
m\gamma_{m}\alpha_{m}=\dfrac{1}{\pi}\int_{0}^{2\pi}\phi\left(  t\right)  \cos
mtdt,\ \gamma_{m}\beta_{m}=\dfrac{1}{\pi}\int_{0}^{2\pi}\phi\left(  t\right)
\sin mtdt,\ \ m=1,2,...\
\]
Hence if $b\geq t_{p}+2\pi$ and $\phi\left(  t\right)  =0$ for $0\leq t\leq
b-t_{p},$ then $m\gamma_{m}\alpha_{m}=0\ \ \ $and\ \ $\gamma_{m}\beta
_{m}=0,\ \ \ m=1,2,...$Since $\gamma_{m}\neq0\ $for $m=1,2,...,\ \alpha
_{m}=\beta_{m}=0,\ m=1,2,...,$ and we obtain that $a=b=0.$ By Corollary
\ref{Cor:1}, wave equation (\ref{wave}) is approximately controllable.
\end{proof}

\end{document}